\title{Some Results on Superpatterns for Preferential Arrangements}
\author{Yonah Biers-Ariel, Rutgers University \and Yiguang Zhang, The Johns Hopkins University\and Anant Godbole, East Tennessee State University}
\date{}
\newtheorem{theorem}{\\Theorem}
\newtheorem{proposition}{\\Proposition}
\newtheorem{definition}{\\Definition}
\newtheorem{open question}{\\Open Question}
\newtheorem{corollary}{\\Corollary}[theorem]
\begin{document}

\maketitle
\begin{abstract}
A {\it superpattern} is a  string of characters of  length $n$  that contains as a subsequence, and in a sense that depends on the context, all the smaller strings of length $k$ in a certain class.  We prove structural and probabilistic results on superpatterns for {\em preferential arrangements}, including (i) a theorem that demonstrates that a string is a superpattern for all preferential arrangements if and only if it is a superpattern for all permutations; and (ii) a result that is reminiscent of a still unresolved conjecture of Alon on the smallest permutation on $[n]$ that contains all $k$-permutations with high probability.
\end{abstract}

\section{Introduction and Statement of Results}  A {\it superpattern} is a  string of characters of  length $n$  that contains as a subsequence, and in a sense that depends on the context, all the smaller strings of length $k$ in a certain class. Specifically, given a set $X$ and a class $\mathcal{R}$ such that each object in $\mathcal{R}$ is a string of $k$ elements in $X$, a \emph{superpattern} is a string that contains all $p \in \mathcal{R}$ as subsequences. 
For example, with $X=\{1,2\}$ and $\mathcal{R}=\{11,12,21,22\}$, 
$$1221$$ is a superpattern. 

In this paper, we present some results on superpatterns for {\it preferential arrangements}, or word-patterns.
Key references in this area are \cite{burstein}, \cite {evdo}, and \cite{kont}.  Preferential arrangements (p.a.'s) of length $k$ over $X=[d]:=\{1,2,\ldots,d\}$ are $k$-strings with entries from $[d]$, for which order isomorphic representations are considered to be equivalent.  For example if $k=3, d=2$, there are seven preferential arrangements, {\it viz.} 111, 112, 121, 211, 122, 212, and 221.  If $k=d=3$, the thirteen preferential arrangements (enumerated whenever $d=k$ by the ordered Bell numbers) are 112, 121, 211, 122, 212, 221, 111, and the six permutations 123, 132, 213, 231, 312, and 321.  Note that, for example, the strings 112, 113, and 223 are order isomorphic, so above we just list the preferential arrangement 112, expressed in the traditional lexicographically minimal fashion, also known as a dense ranking system. If $k=3, d\ge 4$, there are still only 13 p.a.'s, since, for example, with $k=3$ and $d=4$, the six strings 112, 113, 114, 223, 224, 334 are each equivalent to the p.a. 112.  

A superpattern for preferential arrangements of length $k$ over $[d]$ is an $n$-long string over the alphabet $[d]$,  that contains, as a subsequence,  each of the preferential arrangements of length $k$ over $[d]$ in any one of its order isomorphic forms.  For example, a string such as 3213213 is a superpattern for $k=d=3$ {\it or with} $k=3; d=4$, and 1231241 is a superpattern with, e.g., $k=3; d=5$ or $k=3; d=4$.  Let $n(k,d)$ be the length of the shortest superpattern for all p.a.'s of length $k$ over $[d]$.

Now, let us define what we consider to be another natural object:  Let $\nu(k,d)$ be the length of shortest superpattern for $k$-long p.a.'s {\it when each of the letters  in $[d]$ must be used at least once in the superpattern}.    For $k=3$, the examples 1231231, 1231241, and 2353134, as well as the fact  that the p.a. 111 can never occur with $n=7; d\ge 6$ show that $\nu(3,d)=7$ for $d=3,4,5$\footnote{Technically, we have just shown that $\nu(3,4) \le 7; \nu(3,5)\le 7$. A proof that these values equal 7 is not too difficult, and is omitted.}  and that $\nu(3,d)=7+(d-5)$ for $d\ge 6$.  For $k=4$ the situation is more complex:  We have $\nu(4,4)=12$, as seen in Section 2, but $\nu(4,6)\le11$, as seen via the example 43514342634.  This example also shows that the assertion in \cite{burstein} that $n(k,d)=n(k,k)$ for $d>k$ needs further qualification.
\begin{open question} For fixed small values of $k,d$, calculate $\nu(k,d)$.  Establish upper and lower bounds on $\nu(k,d)$.
\end{open question}


The fact that $n(3,3)=7$ (tacitly used above) is elementary and has been shown, e.g., in \cite{martha} (in which the rather complicated waiting time distribution for a random string on $\{1,2,3\}$ to become a 3-superpattern was also studied), and the authors of \cite{burstein} conjectured that $n(k,k)=k^2-2k+4$ for each $k$, a fact that we prove for $k=4$ in Section 2 of this paper.  In the main result of Section 2, Theorem 1, we prove that for each $k$, $n(k,k)=\rho(k,k)=\rho(k)$, where $\rho(k)$ is the shortest superpattern with entries from the alphabet $\{1,2,\ldots,k\}$ that contains all the {\it permutations} of length $k$.  This somewhat surprising result has ramifications: It was shown in \cite{rado} that the quantity $\lceil k^2-\frac{7}{3}k+\frac{19}{3}\rceil$, which is smaller than $k^2-2k+4$ for $k\ge 10$, is an upper bound for $\rho(k)$.  Since, via Theorem 1, $\rho(k)$ and $n(k,k)$ are the same, this disproves the $k^2-2k+4$ conjecture.

The first paper to make the $k^2-2k+4$ conjecture was \cite{newey}, where this conjecture was presented as one of two.  The other conjecture, which, at the present time appears to be the best candidate for the true value of $n(k,k)$, is
$$n(k)=n(k,k)=\begin{cases} k^2&\text{for}\ k=1\\
k^2-k+1&\text{for}\ 2\le k\le 3\\
k^2-2k+4&\text{for}\ 4\le k\le 7\\
\ldots\\
k^2-m\cdot k+\sum_{i=1}^mi2^{m-i}&\text{for}\ 2^m\le k\le 2^{m+1}-1.
\end{cases}$$


In the rest of the paper as well, we will focus on the case $k=d$.  In Section 3, we generalize the work of \cite{abraham} and \cite{martha}  by exhibiting tight bounds on the expected waiting time until a random string, with each letter being  independently and uniformly chosen from $[k]$, becomes a superpattern for $k$-long preferential arrangements (or permutations) over $[k]$.  The waiting time is shown to be tightly concentrated around its mean. This result recalls the Alon Conjecture from \cite{arratia}, which states that the value $n=\frac{k^2}{4}$ is the threshold for a random permutation on $[n]$ to contain each of the  $k!$ $k$-permutations in an order isomorphic form -- with high or low probability.  In our result, the parent string contains repetitions, but the net result is still that each $k$-permutation must appear in a non-isomorphic form.  As seen, e.g, in \cite{linusson}, Alon's conjecture is notoriously hard -- but perhaps an approach exemplified by Theorem 1 might yield dividends.

\section{Length and Structure of Superpatterns}

In this section we will focus on the case $k=d$, and consider $n(k)$, the length of the shortest word on the alphabet $[k]$ containing all preferential arrangements of length $k$. We again consider $\rho(k)$, the length of the shortest word on the alphabet $[k]$ containing all permutations of the elements of $[k]$. We first observe that $n(k)=\rho(k)$ for $3 \le k \le 7$, and then show that $n(k)=\rho(k)$ for each $k$.

It is shown in $\cite{burstein}$ that $n(k) \le k^2-2k+4$ for all $k$, and it is conjectured that this upper bound is actually an equality. That it is a lower bound for $\rho(k)$ when $3 \le k \le 7$ was established in $\cite{newey}$, and since $\rho(k) \le n(k)$, the equality between $n(k)$ and $\rho(k)$ holds at least through $k=7$. We begin by establishing a general lower bound for $\rho(k)$ which agrees with $k^2-2k+4$ for $k=3,4$. Even though the coefficient on the leading term in Proposition 1 is $\frac{1}{2}$, making the bound asymptotically inferior to the bound $n(k)\ge k^2-c(\epsilon)k^{1.75+\epsilon}$ from \cite{kleitman}, it suffices for small values of $k$ as we will see.
\begin{proposition}$\rho (k) \ge \frac{k^2}{2}+\frac{3k}{2}-2$ for all $k\ge 2$.
\end{proposition}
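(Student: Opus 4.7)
The plan is induction on $k$, with the base case $k=2$ handled directly (any string containing both $12$ and $21$ has length at least $3$). For the inductive step, fix $k\ge 3$, assume the bound for $k-1$, and let $w$ be a $k$-superpattern of length $n$; write $c_a, F_a, L_a$ for the number of occurrences, first position, and last position of each letter $a\in[k]$ in $w$. The key structural observation is that every permutation of $[k]$ beginning with $a$ needs, after some occurrence of $a$, an embedding of a permutation of $[k]\setminus\{a\}$, so the suffix $w_{F_a+1}\cdots w_n$ restricted to the alphabet $[k]\setminus\{a\}$ is itself a $(k-1)$-superpattern, giving
\[
n - F_a - (c_a - 1) \;\ge\; \rho(k-1), \qquad \text{i.e.,} \qquad F_a \;\le\; n - \rho(k-1) - c_a + 1.
\]

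From here I would split into two cases according to whether every letter appears at least twice. In Case 1, where $c_a\ge 2$ for every $a$, the displayed inequality becomes $F_a \le n - \rho(k-1) - 1$; since $F_1,\ldots,F_k$ are $k$ distinct positive integers, the interval $[1, n-\rho(k-1)-1]$ must contain at least $k$ integers, forcing $n\ge \rho(k-1) + k + 1$. In Case 2, some letter $a$ appears exactly once, at some position $p$; then every permutation ending with $a$ forces its length-$(k-1)$ prefix into $w_1\cdots w_{p-1}$, which is $a$-free, so $p-1 \ge \rho(k-1)$, and symmetrically $n-p \ge \rho(k-1)$, giving $n\ge 2\rho(k-1) + 1$.

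What remains is to confirm both cases deliver the claimed bound under the inductive hypothesis $\rho(k-1) \ge \frac{(k-1)^2+3(k-1)}{2} - 2$. Case 1 reproduces $\frac{k^2+3k}{2}-2$ exactly after a line of algebra, while Case 2 reduces to the inequality $k^2 - k - 6 \ge 0$, valid for every $k\ge 3$. The conceptually trickiest part is the case split itself: without it, the naive bound obtained by deleting all copies of one letter ($n \ge \rho(k-1) + c_a$) only yields $n \ge \frac{k\rho(k-1)}{k-1} + O(1)$, which falls short of the target by roughly $k/2$; the trick that makes everything work is exploiting the distinctness of the $F_a$'s to turn the per-letter constraint into a global one when no letter is a singleton.
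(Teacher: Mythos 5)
Your proof is correct, and it takes a route that differs from the paper's in its key mechanism, although both are inductions built on the same underlying deletion idea (strip off everything up to a letter's first occurrence together with that letter's other copies, and apply the hypothesis to the remaining word on the smaller alphabet). The paper argues contrapositively with a single distinguished letter --- the last one to make its first appearance --- which automatically sits at position at least $k+1$; to gain the one extra unit needed for the recursion $\rho(k+1)\ge \rho(k)+k+2$, it then shows, by considering permutations beginning $\sigma_2\sigma_1 a$, that this letter must occur a second time whenever its first occurrence lands exactly at position $k+1$. You instead record the constraint $F_a\le n-\rho(k-1)-c_a+1$ for \emph{every} letter and extract the additive $k$ from the pigeonhole fact that the $k$ first-occurrence positions are distinct; your case split is on whether some letter is a singleton, and the singleton case is dispatched by the stronger bound $n\ge 2\rho(k-1)+1$, which reduces to $(k-3)(k+2)\ge 0$. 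Your version avoids the paper's somewhat delicate second-occurrence argument and makes the source of each unit in the recursion transparent, while the paper's version only has to track one letter; both deliver exactly the recursion $\rho(k)\ge\rho(k-1)+k+1$ and hence the same quadratic bound, tight against $k^2/2+3k/2-2$ at every step.
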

\begin{proof}We will proceed by induction. As a base case, note that $\rho (2,2) = 3 = \frac{2^2}{2}+\frac{3\cdot2}{2}-2$.

Now suppose the result holds for some $k\ge 2$, and let $\sigma$ be an arbitrary word on the alphabet $[k+1]$ with length ${(k+1)^2}/{2}+{3(k+1)}/{2}-3$. We will show that $\sigma$ does not contain all permutations of $[k+1]$. Denote the first letter of $\sigma$ by $\sigma_1$, the second letter by $\sigma_2$ and so on. Clearly, each letter in $[k+1]$ must appear somewhere in $\sigma$. Let $a$ be the last letter to appear in $\sigma$; then $a$ appears at the earliest as the $k+1^{th}$ letter of $\sigma$. We will consider two cases: when $a$ first appears as the $k+1^{th}$ letter of $\sigma$ and when $a$ first appears after the $k+1^{th}$ letter. 

In the first case, the subword $\sigma_1\sigma_2...\sigma_{k+1}$ contains all elements of $[k+1]$ exactly once, so the $a$ appearing as the $k+1^{th}$ letter of $\sigma$ cannot be a part of any permutation beginning with $\sigma_2\sigma_1a$. Since $k+1 \ge 3$, permutations of this form must exist, so $a$ appears later on in $\sigma$ as well. Thus, there are at most $\frac{(k+1)^2}{2}+\frac{3(k+1)}{2}-3-(k+2)=\frac{k^2}{2}+\frac{3k}{2}-3$ letters following the first $a$ which are not $a$. However, $\sigma$ contains all permutations of $[k+1]$ beginning with $a$, so it must contain all permutations of $[k+1]\backslash \{a\}$ following the first $a$. But, by the induction hypothesis, $\frac{k^2}{2}+\frac{3k}{2}-3$ are insufficiently many letters to contain all the permutations of $k$ letters.

In the second case, $a$ first occurs at the earliest as the $k+2^{th}$ letter of $\sigma$, so it has at most $\frac{(k+1)^2}{2}+\frac{3(k+1)}{2}-3-(k+2)=\frac{k^2}{2}+\frac{3k}{2}-3$ letters following it. As before, $\sigma$ must contain all permutations of $[k+1]\backslash a$ following the first $a$, but, again, $\frac{k^2}{2}+\frac{3k}{2}-3$ are insufficiently many letters to contain all the permutations of $k$ letters. Thus, $\sigma$ does not contain all permutations of $[k+1]$.\hfill\end{proof}

The fact that $\rho(k) = n(k)$ for $1\le k \le 7$ (the 1 and 2 cases are trivial) suggests that this equality may hold for all $k$, and, in fact it does. Proving this, however, requires two new definitions.
For Definitions 1-2 and Theorem 1, let $A_k=\{a_1,\ldots,a_k\}$ be an arbitrary subset of $\mathbb{N}$ with $|A_k|=k$ and $a_1 < a_2 < ... < a_k$.
\begin{definition}A regular occurrence of a preferential arrangement in a word on the alphabet $A_k$ is an occurrence of that arrangement such that for each letter, supposing there are $i$ letters in the p.a. that are less than that letter and $j$ copies of that letter in the p.a., the letter is represented in the word by some element of $\{a_{i+1},a_{ i+2},...,a_{ i+j} \}$. 
\end{definition}
For instance, if our alphabet is $[6]$, then a regular occurrence of 112232 is one in which the $1^s$ are represented by $1^s$ or $2^s$, the $2^s$ are represented by $3^s$, $4^s$, or $5^s$, and the 3 is represented by a 6. So, 113363 and 225565 are regular occurrences of 112232, but 113343 is not. Then, a {\it regular superpattern} of length $k$ p.a.'s on $[k]$ is defined to be a string that contains a regular occurrence of all p.a.'s.   
\begin{definition}A complete word on $A_k$ is a word on $A_k$ containing every permutation of the elements of $A_k$. So, $\rho(k)$ is the length of the shortest complete word on $[k]$.
\end{definition}
Note that this second definition comes from a body of literature including, for example, $\cite{rado}$.
Now, let $\mathcal{C}_{A_k}$ be the set of complete words on $A_k$, let $\mathcal{S}_{A_k}$ be the set of superpatterns of length $k$ preferential arrangements on $A_k$, and let $\mathcal{R}_{A_k}$ be the set of regular superpatterns of length $k$ preferential arrangements on $A_k$. 
\begin{theorem}For all $A_k$ with $k \ge 2$, $\mathcal{C}_{A_k}=\mathcal{S}_{A_k}=\mathcal{R}_{A_k}$.
\end{theorem}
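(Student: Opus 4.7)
The argument establishes the cyclic chain of containments $\mathcal{R}_{A_k}\subseteq\mathcal{S}_{A_k}\subseteq\mathcal{C}_{A_k}\subseteq\mathcal{R}_{A_k}$. The first two are immediate. Every regular occurrence of a preferential arrangement is, by definition, an occurrence, hence $\mathcal{R}_{A_k}\subseteq\mathcal{S}_{A_k}$. Each permutation of $A_k$ is itself a preferential arrangement of length $k$, and since any order-isomorphic copy of such a permutation inside a word on the alphabet $A_k$ must use all $k$ letters of $A_k$ in the prescribed order, the copy is necessarily that permutation as a literal subsequence; this gives $\mathcal{S}_{A_k}\subseteq\mathcal{C}_{A_k}$.

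The content lies in $\mathcal{C}_{A_k}\subseteq\mathcal{R}_{A_k}$. Fix $w\in\mathcal{C}_{A_k}$ and a preferential arrangement $p$ of length $k$ with distinct values $v_1<\cdots<v_m$ of multiplicities $n_1,\ldots,n_m$, so that the alphabet blocks are $B_j=\{a_{i_j+1},\ldots,a_{i_j+n_j}\}$ with $i_j=n_1+\cdots+n_{j-1}$. The starting observation is a clean reformulation of regularity: an occurrence of $p$ in $w$ uses exactly $m$ distinct letters $d_1<\cdots<d_m$, and since the blocks $B_1,\ldots,B_m$ themselves partition $A_k$ into consecutive intervals of sizes $n_1,\ldots,n_m$, such an occurrence is regular precisely when $\{d_1,\ldots,d_m\}$ meets each $B_j$ in exactly one element. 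So the task reduces to locating an order-isomorphic copy of $p$ in $w$ whose distinct-value set is a \emph{block transversal}.

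I would proceed by induction on $k-m$, the total amount of collapsing in $p$. The base case $k-m=0$ is trivial: $p$ is a permutation, every block is a singleton, and the occurrence of $p$ supplied by completeness is automatically a block transversal. For the inductive step, pick any block $B_j$ with $n_j\ge 2$ and refine $p$ to a p.a.\ $p^+$ with $m+1$ distinct values by separating one copy of $v_j$ as a new value lying strictly between $v_j$ and $v_{j+1}$; the refined block structure partitions $B_j$ into $\{a_{i_j+1},\ldots,a_{i_j+n_j-1}\}$ together with the singleton $\{a_{i_j+n_j}\}$. By induction $w$ contains a regular occurrence of $p^+$, and the remaining task is to collapse the two resulting representatives within $B_j$ back into a single one.

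The main obstacle is precisely this collapse, since the regular occurrence of $p^+$ uses genuinely distinct letters of $A_k$ at the two split positions and subsequence inclusion does not allow identifying them for free. To handle it I would bring the full force of completeness to bear: $w$ contains all $k!$ permutations of $A_k$, in particular all $\prod_\ell n_\ell!$ permutations whose block pattern matches that of $p$, and I would seek an exchange or pigeonhole argument across this structured family that forces a single letter of $B_j$ to occupy the required $n_j$ positions. A more tractable route worth attempting first is to dispose of the extremal case $p=v_1^k$ directly, by showing that every complete word on $A_k$ contains some letter at least $k$ times --- this can be extracted from the observation that the prefix of $w$ preceding the last occurrence of any letter $a$ must itself be a complete word on $A_k\setminus\{a\}$, with a symmetric suffix constraint, so that if every letter appeared fewer than $k$ times the first occurrences of all $k$ letters could not fit into the available prefix --- and then ascending to general $p$ via analogous structural analyses of letter frequencies relative to $p$'s block decomposition.
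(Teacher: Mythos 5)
Your easy containments, the block-transversal reformulation of regularity, and the base case ($p$ a permutation) are all correct. But the proof has a genuine gap at exactly the point you flag as ``the main obstacle'': the inductive step is never carried out. A regular occurrence of the refinement $p^+$ represents the $n_j-1$ retained copies of $v_j$ by one letter of $\{a_{i_j+1},\ldots,a_{i_j+n_j-1}\}$ and the split-off copy by the distinct letter $a_{i_j+n_j}$, whereas an occurrence of $p$ (being order-isomorphic) must represent all $n_j$ copies of $v_j$ by a \emph{single} letter. So the occurrence of $p^+$ gives you essentially nothing toward an occurrence of $p$; already for $k=2$, $p=11$, $p^+=12$, the regular occurrence $a_1a_2$ carries no information about whether some letter repeats. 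The suggested remedies (a pigeonhole over the $\prod_\ell n_\ell!$ permutations with matching block pattern, or the frequency analysis for the extremal case $p=v_1^k$) are speculative sketches, not arguments, and the extremal case is in any event the easiest instance rather than the crux. Since $\mathcal{C}_{A_k}\subseteq\mathcal{R}_{A_k}$ \emph{is} the theorem, what you have is a correct reduction of the problem plus an unproved central claim.

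The paper's proof avoids the collapse problem entirely by inducting on the alphabet size $k$ rather than on the amount of collapsing in $p$. Given a complete word $\sigma$ and a p.a.\ $\pi$, one peels off the \emph{first letter} of $\pi$, identifies which alphabet letter should represent it --- when that first letter $c$ has multiplicity $p>1$ and $i$ letters below it, the candidates are $\{a_{i+1},\ldots,a_{i+p}\}$, and the right move is to let $a_t$ be the \emph{last} of these to make its first appearance in $\sigma$ --- and then observes that the portion of $\sigma$ after the first $a_t$, with $a_t$ deleted, is a complete word on the $(k-1)$-letter alphabet $A\setminus\{a_t\}$. Induction supplies a regular occurrence of $\pi'$ (the tail of $\pi$) there, and a careful index bookkeeping shows that prepending the appropriate letter yields a regular occurrence of $\pi$; the choice of $a_t$ as the last candidate to appear is what guarantees the prepended letter sits early enough in $\sigma$. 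If you want to salvage your approach, the lesson is that one should delete a letter of the p.a.\ (and a matching letter of the alphabet) rather than try to merge two values of a refinement, because deletion preserves both completeness of the residual word and regularity under an index shift, while merging requires information that a single occurrence of the refined pattern does not contain.
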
\begin{proof}
It is clear that $\mathcal{R}_{A_k} \subseteq \mathcal{S}_{A_k} \subseteq \mathcal{C}_{A_k}$, so it remains to show that $\mathcal{C}_{A_k} \subseteq \mathcal{R}_{A_k}$. We proceed by induction. As a base case, note that for $A_2$ any $\sigma_\mathcal{C} \in \mathcal{C}_{A_2}$ contains either the subsequence $a_1a_2a_1$ or $a_2a_1a_2$, so $\sigma_\mathcal{C} \in \mathcal{R}_{A_2}$. Now suppose that $\mathcal{C}_{A_{k-1}} \subseteq \mathcal{R}_{A_{k-1}}$. Choose any $A_k$ (hereafter, we simply call this set $A$), choose some $\sigma_\mathcal{C} \in \mathcal{C}_A$, and let $\pi$ be an arbitrary preferential ordering of length $k$. Let $\pi'$ be the portion of $\pi$ following its first letter. We will now find a regular occurence of $\pi$ in $\sigma_\mathcal{C}$ in both of two cases.

$\textit{Case 1:}$
Suppose that the first letter in $\pi$ occurs just once in $\pi$. Call this first letter $c$, and let $i$ be the number of letters in $\pi$ less than $c$. Then, any regular occurrence of $\pi$ represents $c$ using $a_{i+1}$. Now, let $B= A\backslash\{a_{i+1}\} = \{b_1, b_2, ..., b_{k-1} \}$ where $b_1 < b_2 <...< b_{k-1}$. Note that $b_j = a_j$ for $j< i+1$ and $b_j = a_{j+1}$ for $j \ge i+1$. Now, let $\sigma_\mathcal{C}'$ be the portion of $\sigma_\mathcal{C}$ following its first occurrence of $a_{i+1}$ with all the $a_{i+1}$'s removed. Since $\sigma_\mathcal{C} \in \mathcal{C}_A$, it follows that $\sigma_\mathcal{C}' \in \mathcal{C}_B$. By the induction hypothesis, then, $\sigma_\mathcal{C}' \in \mathcal{R}_B$, so it contains a regular occurrence of $\pi'$. We claim that appending $a_{i+1}$ to the beginning of this occurrence gives a regular occurrence of $\pi$. First, consider any letter $d < c$. Suppose there are $j$ instances of $d$ in $\pi$ and $d$ is greater than $l$ other letters in $\pi$ noting that $l+ j \le i$ must hold. Then, $d$ must be represented in our regular occurrence of $\pi'$ by some element of $\{a_{l+1},...,a_{ l+j} \}$. Since there are also $j$ instances of $d$ and $l$ letters less than $d$ in $\pi'$, we know that $d$ is represented in the regular occurrence of $\pi'$ by some element of $\{b_{l+1},...,b_{l+j} \}$, and this set is equivalent to $\{a_{l+1},...,a_{ l+j} \}$ because all the indices are less than $i+1$. Now consider $c$. For our occurrence to be regular, $c$ must be represented using $a_{i+1}$, and it is. Finally consider $d > c$. Again, suppose there are $j$ instances of $d$ in $\pi$ and $d$ is greater than $l$ other letters in $\pi$ noting that, this time, $l \ge i+1$. As before, $d$ must be represented in our regular occurrence by some element of $\{a_{l+1},...,a_{ l+j} \}$. Now, though, there are $j$ instances of $d$ and $l-1$ letters less than $d$ in $\pi'$, so $d$ is represented in the regular occurrence of $\pi'$ by some element of $\{ b_l,...,b_{l+j-1} \}$, and this set is equivalent to $\{a_{l+1},...,a_{l+j} \}$ because all indices are at least $i+1$. Thus, each letter in $\pi$ is correctly represented, and we have a regular occurrence.

$\textit{Case 2:}$
Suppose that the first letter in $\pi$ occurs $p$ times with $p>1$. Call this first letter $c$, and let $i$ be the number of letters in $\pi$ less than $c$. Then, any regular occurrence of $\pi$ represents $c$ using an element of $\{a_{i+1},..., a_{i+p} \}$. Let $a_t$ be the last of those elements to make its first appearance in $\sigma_\mathcal{C}$, and let $\sigma_\mathcal{C}'$ be the portion of $\sigma_\mathcal{C}$ following the first occurrence of $a_t$ with all subsequent $a_t$'s removed. As in case 1, let $B= A\backslash \{a_t\} = \{b_1, b_2, ..., b_{k-1} \}$ where $b_1 < b_2 <...< b_{k-1}$ and note that $b_j = a_j$ for $j< t$ and $b_j = a_{j+1}$ for $j \ge t$. Now, $\sigma_\mathcal{C}' \in \mathcal{C}_B$, and by the induction hypothesis, $\mathcal{C}_B \subseteq \mathcal{R}_B$, so $\sigma_\mathcal{C}'$ contains a regular occurrence of $\pi'$. Since there are $p-1$ occurrences of $c$ and $i$ letters less than $c$ in $\pi'$, $c$ must be represented in our regular occurrence by some element of $\{b_{i+1},...,b_{i+p-1} \}$ which is equivalent to $\{a_{i+1},...,a_{i+p} \}\backslash \{a_t\}$. Let $a_s$ be the element in this set which represents $c$, and note that it must occur before the first appearance of $a_t$ by our choice of $a_t$. We will show that appending $a_s$ to the beginning of the regular occurrence of $\pi'$ gives a regular occurrence of $\pi$. As already noted, $c$ is represented by $a_s \in \{a_{i+1},...,a_{ i+p} \}$, and for any $d<c$, the proof that $d$ is correctly represented is identical to the proof in case 1. For $d>c$, suppose there are $j$ instances of $d$ in $\pi$ and $d$ is greater than $l$ other letters in $\pi$ noting that $l \ge i+p$. Then, $d$ must be represented in our regular occurrence by some element of $\{a_{l+1},...,a_{l+j} \}$. There are $j$ instances of $d$ and $l-1$ letters less than $d$ in $\pi'$, so $d$ is represented in the regular occurrence of $\pi'$ by some element of $\{ b_l,...,b_{l+j-1} \}$, and this set is equivalent to $\{a_{l+1},...,a_{l+j} \}$ because all indices are at least $i+p \ge t$. Thus, we have found a valid regular occurrence of $\pi$ in $\sigma_\mathcal{C}$.
\hfill\end{proof}
Theorem 1 is useful in two regards. First, it allows us to apply everything known about complete words to superpatterns of preferential arrangments. As noted in the introduction, this immediately gives us that $\lceil k^2-\frac{7}{3}k+\frac{19}{3}\rceil$ is an upper bound on $n(k)$, thereby disproving a long-standing conjecture. Theorem 1 could also potentially help in finding lower bounds for $n(k)$ because proving that no words of a certain length are regular superpatterns may be easier than proving that no words are superpatterns, but this approach has not been fruitful so far.

\section{Random Superpatterns}
Finally, we will prove a result regarding random superpatterns. Consider the following random process: beginning with an empty word $\overline{W}$, at each timestep we choose a letter, uniformly at random, from the alphabet $[k]$. We then concatenate this value onto the end of $\overline{W}$ and check to see if the augmented $\overline{W}$ is a superpattern for all $k$-long preferential arrangements on $[k]$ (or, equivalently, a complete word on $[k]$). We are interested in the value of $E[X_k]$ where $X_k$ is the first timestep at which $\overline{W}$ is a superpattern on $[k]$. This problem was first considered by Godbole and Liendo in \cite{martha}. There, the authors found values for $E[X_2]$ and $E[X_3]$ as well as the exact distributions of $X_2$ and $X_3$; here we will apply a previous result to give a general upper bound on $E[X_k]$, and then prove a lower bound. These bounds will be asymptotically equivalent and together prove that $E[X_k] \sim k^2 \log k$.  The distribution of $X_k$ appears to be intractable for $k\ge4$.  

Abraham et al. consider a similar problem in \cite{abraham}; they were interested in omnisequences which must contain every $k$-letter word on $[k]$, and find that $Z_k$, the expected number of randomly chosen letters necessary to produce an omnisequence is asymptotically $k^2 \log k$ (with error terms as described below). This work has connections to the coupon collector problem as studied in \cite{flajolet}, \cite{wilf},  and \cite{zeilberger}; these connections carry forward to the work in this section.  Since every omnisequence of $k$-letter words on $[k]$ is also a superpattern on $[k]$, we obtain the following corollary of Abraham's work, where $\gamma\approx .577$ is Euler's constant and $\log$ denotes the natural logarithm.
\begin{theorem} $E[X_k]\le k^2(\log(k)+\gamma+O(k^{-1}))$ for all $k$.
\end{theorem}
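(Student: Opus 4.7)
The plan is to prove this as a direct coupling/domination argument using Theorem 1 together with the omnisequence result of Abraham et al. First I would observe that by Theorem 1, being a superpattern for length-$k$ preferential arrangements on $[k]$ is the same as being a complete word on $[k]$, i.e., containing every permutation of $[k]$ as a subsequence. Every permutation of $[k]$ is, in particular, a $k$-letter word on $[k]$. Therefore any omnisequence (a string containing every $k$-letter word on $[k]$ as a subsequence) is automatically a complete word and hence, by Theorem 1, a superpattern for preferential arrangements.

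Next I would couple the two random processes on the same sample space: let $\overline{W}$ be the i.i.d.\ uniform random word on $[k]$ used in the definition of $X_k$, and define $Z_k$ to be the first timestep at which $\overline{W}$ becomes an omnisequence. By the observation above, at every time $t$ for which $\overline{W}_{1:t}$ is an omnisequence, $\overline{W}_{1:t}$ is also a superpattern; consequently $X_k \le Z_k$ pointwise, so $\e[X_k] \le \e[Z_k]$.

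Finally I would invoke the main result of Abraham et al.\ \cite{abraham}, which gives the sharp asymptotic bound
\[
\e[Z_k] \le k^{2}\bigl(\log k + \gamma + O(k^{-1})\bigr).
\]
Combining this with the domination $\e[X_k] \le \e[Z_k]$ yields the stated inequality.

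There is no substantive obstacle here — the entire statement is a corollary once Theorem 1 is in hand, because Theorem 1 is what lets us identify ``superpattern for preferential arrangements'' with ``complete word,'' which is precisely the weaker condition implied by the omnisequence property. The only mild care needed is in making the domination $X_k \le Z_k$ rigorous via the coupling on a single random word, rather than comparing marginal distributions. The matching lower bound $\e[X_k] \gtrsim k^2 \log k$ claimed in the introduction is a separate and genuinely nontrivial task, but it is not part of the present theorem.
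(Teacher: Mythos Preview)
Your proof is correct and follows essentially the same route as the paper: both arguments couple $X_k$ and $Z_k$ on the same random word, observe $X_k \le Z_k$ pointwise, and then quote the asymptotic $\e[Z_k] = k^{2}(\log k + \gamma + O(k^{-1}))$ from \cite{abraham}.

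One small remark: your detour through Theorem~1 is unnecessary. You argue ``omnisequence $\Rightarrow$ complete word $\Rightarrow$ (by Theorem~1) superpattern,'' but the paper simply notes ``omnisequence $\Rightarrow$ superpattern'' directly. This is immediate because every preferential arrangement of length $k$, written in its canonical (dense) form, is already a $k$-letter word over $[k]$, and hence occurs as a subsequence of any omnisequence. So Theorem~1 is not what makes this corollary work; the implication you need is the trivial one, not the nontrivial reverse inclusion that Theorem~1 supplies.
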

The next theorem provides a similar lower bound.
\begin{theorem} For all $k$, $E[X_k] \ge k^2(\log(k)+\gamma-1+O(k^{-1})).$
\end{theorem}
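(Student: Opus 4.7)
My plan is to establish the recursive lower bound
\[
E[X_k] \;\ge\; \tfrac{k}{k-1}\, E[X_{k-1}] + k H_k - O(1)
\]
and then iterate it. By Theorem 1, I may view $X_k$ equivalently as the first time at which the iid uniform $[k]$-sequence $W_1 W_2 \cdots$ contains every permutation of $[k]$ as a subsequence. For each letter $a \in [k]$, let $T_a$ be the first time $t$ at which $W_{1:t}$ contains every permutation of $[k]$ ending in $a$; since each permutation ends in exactly one letter, $X_k = \max_a T_a$.

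First I would decompose each $T_a$ recursively. The condition ``$W_{1:t}$ contains every permutation ending in $a$'' is equivalent to the existence of a position $p \le t$ with $W_p = a$ such that $W_{1:p-1}$ restricted to $[k] \setminus \{a\}$ forms a complete word on $k-1$ letters. Since the subsequence of $W$ on non-$a$ positions is iid uniform on a $(k-1)$-letter alphabet, the ``restricted completion time'' $R_a$ satisfies $E[R_a] = \tfrac{k}{k-1} E[X_{k-1}]$ by Wald's identity. Writing $T_a = R_a + G_a$ with $G_a$ a geometric$(1/k)$ wait for the next $a$, we obtain $E[T_a] = \tfrac{k}{k-1} E[X_{k-1}] + k$.

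Next I would extract an additional $k H_k$ term from $E[\max_a T_a]$. The intuition is that after time $R^* := \max_a R_a$, when every restricted subsequence has become complete, for each letter $a$ whose geometric wait $G_a$ has not yet finished we still need at least one further $a$; by the strong Markov property at $R^*$ and by symmetry across letters, these remaining waits are distributionally close to $k$ independent geometric$(1/k)$ random variables, whose maximum has expectation $k H_k = k\log k + k\gamma + O(1)$ via the classical coupon-collector identity $\int_0^1 (1-(1-u)^k)/u\, du = H_k$. Combining the two pieces yields the displayed recursion.

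Setting $f(k) := E[X_k]/k$ turns the recursion into $f(k) \ge f(k-1) + H_k - O(1/k)$. Summing from $k = 2$ and using the identity $\sum_{j=1}^k H_j = (k+1)H_k - k$ gives $f(k) \ge k H_k - k + O(\log k)$, hence $E[X_k] \ge k^2(\log k + \gamma - 1) + O(k \log k)$; sharpening the per-step error from $O(1/k)$ to $O(1/k^2)$ is what would reduce the overall error to the stated $O(k)$. The main obstacle is the coupon-collector extraction step: the times $R_a$ and $G_a$ are dependent through the shared sample path, and the set of ``unfinished'' letters at $R^*$ has a nontrivial distribution, so the argument requires a careful combination of the strong Markov property at $R^*$ with exchangeability across the $k$ letters, plus tail estimates controlling how the $R_a$'s can straggle.
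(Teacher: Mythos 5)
Your setup is sound as far as it goes: the decomposition $X_k=\max_a T_a$ with $T_a=R_a+G_a$, and the identity $E[R_a]=\frac{k}{k-1}E[X_{k-1}]$ via Wald, are all correct. But there is a genuine gap at the central step, the ``coupon-collector extraction'' that is supposed to upgrade $\max_a E[T_a]=\frac{k}{k-1}E[X_{k-1}]+k$ to $E[\max_a T_a]\ge\frac{k}{k-1}E[X_{k-1}]+kH_k-O(1)$. The extra $k(H_k-1)\approx k\log k$ must come from the maximum of the residual geometric waits, and that maximum is only of order $kH_k$ if essentially all $k$ letters are still unfinished at a common reference time, i.e.\ if the completion times $R_a$ cluster in a window of width $o(k)$. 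There is no reason to expect this: the $R_a$ are completion times of $k$ heavily overlapping processes whose individual standard deviations are of order $k^{3/2}$ (compare Theorem 4), and if they spread over a window of width $ck\log k$, only about $k^{1-c}$ letters remain unfinished at $R^*=\max_a R_a$, so the residual maximum contributes only about $(1-c)k\log k$. The full gain is thus split in an uncontrolled proportion between $E[R^*]-E[R_a]$ and the residual waits; your phrase ``distributionally close to $k$ independent geometrics'' is precisely the assertion that needs proof. The difficulty is compounded by the precision required: to recover the constant $\gamma-1$ your recursion needs per-step error $o(k)$ (your bookkeeping assumes $O(1)$), whereas every uncontrolled quantity here --- the spread of the $R_a$, the size and law of the unfinished set --- is plausibly $\Omega(k)$ or larger. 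The honest version of your argument gives only $E[X_k]\ge E[R^*]+k\ge\frac{k}{k-1}E[X_{k-1}]+k$, which iterates to roughly $k^2$, far short of the theorem.

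For contrast, the paper's proof avoids the dependence problem entirely by not tracking all $k!$ permutations at once. It adaptively selects a single permutation $k_1k_2\cdots k_k$ from the realized word ($k_1$ is the last letter of $[k]$ to appear, $k_2$ the last letter of $[k]\setminus\{k_1\}$ to appear after that, and so on). The waiting time to contain just this one permutation is then an exact sum of $k$ \emph{independent} partial coupon-collector blocks with $E[Y_{k,i}]=kH_{k-i+1}$, and summing gives $k(k+1)H_k-k^2$ with no error terms arising from dependence. If you want to pursue your recursion, the missing lemma is a quantitative statement about the joint law of the $R_a$'s and the unfinished set at time $R^*$; as written, the crux of the proof is absent.
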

\begin{proof}
Fix $k$, and let $\overline{W}$ be a superpattern on $[k]$. We will define a word $W$ which $\overline{W}$ must contain, and then we will calculate $E[Y_k]$, where $Y_k$ is the number of letters used before $W$ appears. Let $k_1$ be the last element of $[k]$ to make its first appearance in $\overline{W}$ and let this appearance be the $p_1\textsuperscript{th}$ letter of $\overline{W}$. Then let $k_2$ be the last element of $[k]\backslash\{k_1\}$ to make its first appearance after $\overline{w}_{p_1}$, and let this appearance be the $p_2\textsuperscript{th}$ letter of $\overline{W}$. In general, let $k_i$ be the last element of $[k]\backslash\{k_1, k_2,...,k_{i-1}\}$ to make its first appearance after $\overline{w}_{p_{i-1}}$, and let this appearance occur at the $p_i\textsuperscript{th}$ letter of $\overline{W}$. Now, $W$ consists of $k$ blocks; the first block contains all letters in $[k]$, the second block contains all letters in $[k]\backslash\{k_1\}$, and so on. Because $\overline{W}$ is a superpattern on $[k]$, in particular because it contains $k_1k_2...k_k$ as a subsequence, it must contain $W$ as a subsequence.  Note that the string $k_1k_2...k_k$ is not necessarily the last permutation to occur; e.g., the superpattern 1231213 has $k_1k_2k_3=321$ even though the last permutation to appear is 213.

Now, let $Y_{k,i}$ be the number of timesteps needed to form the $i\textsuperscript{th}$ block of $W$. This block must contain $k-i+1$ distinct letters from the set $[k]\backslash\{k_1,k_2,...,k_{i-1}\}$. At each timestep, we add one of $k$ possible letters; there are $k-i+1$ possibilities for the first distinct letter, and so it appears after $T_{i,1}$ timesteps where $T_{i,1}$ follows a geometric distribution with parameter $\frac{k-i+1}{k}$. Then, there are $k-i$ possibilities for the second distinct letter and so on. Therefore, to find the $j\textsuperscript{th}$ distinct letter requires waiting $T_{i,j}$ timesteps, where $T_{i,j}$ follows a geometric distribution with parameter $\frac{k-i-j+2}{k}$. Thus, we have that
$$E[Y_{k,i}]=\sum_{j=1}^{k-i+1} E[T_{i,j}] =\sum_{j=1}^{k-i+1} \frac{k}{k-i-j+2}=\sum_{j=i}^{k} \frac{k}{k-j+1}.$$
Using the fact that $E[Y_k] = \sum_{i=1}^k E[Y_{k,i}]$, we now get
\begin{align*}E[Y_k]&=\sum_{i=1}^k \sum_{j=i}^{k} \frac{k}{k-j+1}= \sum_{j=1}^k \sum_{i=1}^j \frac{k}{k-j+1} =\sum_{j=1}^k j \frac{k}{k-j+1}=k\sum_{j=1}^k \frac{j}{k-j+1}.
\end{align*}
Lastly, we take $j \mapsto k-j+1$ to see that
\begin{align*}
k\sum_{j=1}^k \frac{j}{k-j+1}&=k\sum_{j=1}^k \frac{k-j+1}{j}\\&=k^2\sum_{j=1}^k \frac{1}{j} -k\sum_{j=1}^k 1 +k \sum_{j=1}^k \frac{1}{j} 
\\
&\ge k^2 (\log (k)+\gamma+O(k^{-1})) -k^2\\ 
&= k^2(\log(k) + \gamma -1 +O(\log k/k)).
\end{align*}
Therefore, $E[X_k] \ge E[Y_k] \ge k^2(\log(k) + \gamma -1 +O(k^{-1})))$.
\end{proof}
\begin{corollary} As $k \rightarrow \infty$, $E[X_k] \sim k^2 \log (k)$.
\end{corollary}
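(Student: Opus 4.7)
The plan is to derive the corollary as an immediate consequence of the matching upper and lower bounds established in Theorems 2 and 3. Both bounds have the form $k^2(\log k + C + O(k^{-1}))$ for a constant $C$ (namely $\gamma$ for the upper bound and $\gamma - 1$ for the lower bound, with a slightly larger $O(\log k / k)$ error in the lower bound as written). The leading-order term $k^2 \log k$ is identical on both sides, and everything else is of strictly smaller order.

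First I would invoke Theorem 2 to write $E[X_k] \le k^2 \log k + k^2 \gamma + O(k)$ and Theorem 3 to write $E[X_k] \ge k^2 \log k + k^2(\gamma - 1) + O(k \log k)$. Then I would divide both inequalities by $k^2 \log k$ to obtain
\[
1 + \frac{\gamma - 1}{\log k} + O\!\left(\frac{1}{k}\right) \;\le\; \frac{E[X_k]}{k^2 \log k} \;\le\; 1 + \frac{\gamma}{\log k} + O\!\left(\frac{1}{k \log k}\right).
\]
Letting $k \to \infty$, each of $1/\log k$, $1/k$, and $1/(k\log k)$ tends to zero, so the sandwich theorem forces $E[X_k]/(k^2 \log k) \to 1$, which is exactly the meaning of $E[X_k] \sim k^2 \log k$.

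There is no genuine obstacle here, since both halves of the sandwich are already in hand. The only thing to be careful about is bookkeeping on the error terms, in particular making sure that the $O(\log k / k)$ error in the lower bound (arising from the harmonic-sum expansion $\sum_{j=1}^{k} 1/j = \log k + \gamma + O(1/k)$ multiplied by $k$) still vanishes after division by $k^2 \log k$; it does, since $(k \log k)/(k^2 \log k) = 1/k \to 0$. Thus the corollary follows in a single short paragraph once Theorems 2 and 3 are cited.
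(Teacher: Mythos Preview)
Your proposal is correct and matches the paper's approach: the paper simply states the corollary without proof, as it follows immediately from sandwiching $E[X_k]$ between the bounds of Theorems~2 and~3 and observing that both are $k^2\log k(1+o(1))$. Your careful check that the error terms vanish after division by $k^2\log k$ is exactly what is needed, and nothing more is required.
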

We are also interested in the concentration of $X_k$ about its mean; in particular we would like to find a lower bound which $X_k$ exceeds with high probability and an upper bound which $X_k$ falls below with high probability. A conjecture of Noga Alon states that for a random permutation of $[n]$ to contain all $k!$ permutations of $[k]$ with high probability, one must have $n=\frac{k^2}{4}(1+o(1))$ \cite{arratia}. While this conjecture has remained open for fifteen years, we will prove an analogue regarding superpatterns of superpatterns of preferential arrangements (equivalently superpatterns of permutations) when restricted to the alphabet $[k]$. As in the previous proof, we will find it useful to work with $Y_k$ instead of $X_k$, and the first step is to bound the variance of $Y_k$. 
\begin{theorem} var$(Y_k)=\Theta(k^3)$.
\end{theorem}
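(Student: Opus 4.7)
The plan is to leverage the decomposition of $Y_k$ established inside the proof of Theorem 3. Recall that
\[
Y_k \;=\; \sum_{i=1}^{k} \sum_{j=1}^{k-i+1} T_{i,j},
\]
where $T_{i,j}$ is the waiting time between the $(j-1)$-st and $j$-th new letter within the $i$-th block, and is geometric with success probability $p_{i,j} = (k-i-j+2)/k$. The first task is to verify that the $T_{i,j}$ are mutually independent. Within a fixed block this is the standard coupon-collector fact that successive waiting times for new coupons are independent geometrics. Across blocks one appeals to two observations: (i) memorylessness of the underlying i.i.d.\ uniform draws, so the process restarts afresh at the end of each block; and (ii) the distribution of the waiting times in block $i$ depends only on the \emph{number} $i-1$ of already-excluded letters, not on their identities, so the random choice of which letters play the roles $k_1,\ldots,k_{i-1}$ does not entangle the $T_{i,j}$.

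Given independence, the variance equals
\[
\mathrm{var}(Y_k) \;=\; \sum_{i=1}^{k}\sum_{j=1}^{k-i+1} \frac{1-p_{i,j}}{p_{i,j}^2}
\;=\; \sum_{i=1}^{k}\sum_{j=1}^{k-i+1} \frac{k(i+j-2)}{(k-i-j+2)^2}.
\]
Next I would set $\ell = k-i-j+2$ and note that for each $\ell \in \{1,\ldots,k\}$ there are exactly $k-\ell+1$ pairs $(i,j)$ producing that value; swapping the order of summation collapses the double sum to
\[
\mathrm{var}(Y_k) \;=\; k\sum_{\ell=1}^{k} \frac{(k-\ell+1)(k-\ell)}{\ell^2}.
\]

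Matching upper and lower bounds then follow by comparison with the series $\sum \ell^{-2}$. The upper bound is immediate from $(k-\ell+1)(k-\ell) \le k^2$, giving $\mathrm{var}(Y_k) \le k^3 \sum_{\ell\ge 1} \ell^{-2} = O(k^3)$. For the lower bound it is enough to retain the range $1 \le \ell \le \lfloor k/2 \rfloor$, on which $(k-\ell+1)(k-\ell) \ge k^2/4$, yielding $\mathrm{var}(Y_k) \ge (k^3/4)\sum_{\ell=1}^{\lfloor k/2 \rfloor} \ell^{-2} = \Omega(k^3)$.

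The step I expect to demand the most care is the independence claim --- the fact that the random labels $k_1, k_2, \ldots, k_k$, which are measurable functions of the same random word that generates the $T_{i,j}$, nevertheless do not couple the waiting times across blocks. Once that symmetry-and-memorylessness argument is spelled out, everything that remains is a routine change of variables followed by harmonic-series comparisons; no delicate tail estimates are required.
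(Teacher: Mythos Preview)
Your proposal is correct and follows essentially the same route as the paper: both exploit mutual independence of the geometric waiting times $T_{i,j}$, sum their variances, and reduce to the single sum $k\sum_{\ell=1}^{k}(k-\ell+1)(k-\ell)/\ell^{2}$. The paper simply asserts independence and then extracts the sharper asymptotic $\mathrm{var}(Y_k)\sim\tfrac{\pi^{2}}{6}k^{3}$, whereas you argue independence more carefully and stop at the (sufficient) cruder $\Theta(k^{3})$ bounds.
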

\begin{proof}
The proof closely follows the proof of the previous theorem. As before, we begin by calculating $var[Y_{k,i}]$. Recall that each $Y_{k,i}$ is the sum of random variables $T_{i,j}$ each of which follows a geometric distribution with parameter $\frac{k-i-j+2}{k}.$ Note that the $Y_{k,i}$ are all mutually independent, as are all the $T_{i,j}$. Therefore, we have that
\[var[Y_{k,i}]=\sum_{j=1}^{k-i+1} var[T_{i,j}] =\sum_{j=1}^{k-i+1} \frac{i+j-2}{k} \cdot \frac{k^2}{(k-i-j+2)^2}=k\sum_{j=1}^{k-i+1} \frac{i+j-2}{(k-i-j+2)^2}.\]
Making the substitution $j \mapsto j+i-1$, this becomes
\[k \sum_{j=i}^{k} \frac{j-1}{(k-j+1)^2}.\]
Since the $Y_{k,i}\textsuperscript{s}$ are independent, we have $var[Y_k] = \sum_{i=1}^k var[Y_{k,i}]$, and so we now get 
\begin{align*}
var[Y_k] &= \sum_{i=1}^k k\sum_{j=i}^k \frac{j-1}{(k-j+1)^2}
\\
& = k\sum_{j=1}^k \sum_{i=1}^j \frac{j-1}{(k-j+1)^2}
\\
& = k \sum_{j=1}^k \frac{j(j-1)}{(k-j+1)^2}.
\end{align*}
Make the substitution $j \mapsto k-j+1$ to get
\begin{align*}
var[Y_k] & = k\sum_{j=1}^k \frac{(k-j+1)(k-j)}{j^2}
\\
& =k \sum_{j=1}^k \frac{k^2-2kj+j^2+k-j}{j^2}
\\
& = (k^3+k^2)\sum_{j=1}^k \frac{1}{j^2} -(2k^2 +k)\sum_{j=1}^k \frac{1}{j} +k^2
\\
& \sim \frac{\pi^2}{6} k^3.
\end{align*}
\end{proof}
The corresponding result for $Z_k$, i.e. that $var[Z_k] = \Theta(k^3)$ is also proved by Abraham et al. in \cite{abraham}. Now that we have a handle on the variances of $Y_k$ and $Z_k$, we use Chebyshev's inequality to find bounds between which $X_k$ falls with high probability. 
\begin{theorem} With high probability, we have that $k^2\log k - (1 -\gamma) k^2 - \omega(1) k^{\frac{3}{2}} \le X_k \le  k^2\log k + \gamma k^2+\omega(1) k^{\frac{3}{2}}$ where $\omega(1)$ is any sequence tending to $\infty$.
\end{theorem}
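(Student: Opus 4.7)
The plan is to sandwich $X_k$ between the two waiting times $Y_k$ and $Z_k$ already under study, and then apply Chebyshev's inequality on each side. The pathwise inequality $Y_k \le X_k \le Z_k$ is immediate from the definitions: once $\overline{W}$ becomes a superpattern it must contain the auxiliary word $W$ constructed in the proof of Theorem 3, so $Y_k \le X_k$; and every omnisequence on $[k]$ is automatically a complete word, hence by Theorem 1 a superpattern of preferential arrangements, so $X_k \le Z_k$.

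For the upper tail I would invoke Abraham et al.'s estimate $\e[Z_k] = k^2 \log k + \gamma k^2 + O(k)$ together with their bound $\mathrm{var}(Z_k) = \Theta(k^3)$. Chebyshev's inequality then yields
\[
\p\bigl(Z_k \ge \e[Z_k] + t\bigr) \le \mathrm{var}(Z_k)/t^2,
\]
and setting $t = \omega(1)\cdot k^{3/2}$ for any sequence $\omega(1)\to\infty$ drives the right-hand side to zero. Since $X_k \le Z_k$ and the $O(k)$ error in $\e[Z_k]$ is absorbed into the $\omega(1) k^{3/2}$ slack, we conclude $X_k \le k^2\log k + \gamma k^2 + \omega(1) k^{3/2}$ with high probability.

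For the lower tail, the computation in the proof of Theorem 3 gives $\e[Y_k] = k^2\log k + (\gamma - 1)k^2 + O(k\log k)$, and Theorem 4 gives $\mathrm{var}(Y_k) \sim \frac{\pi^2}{6}k^3 = \Theta(k^3)$. The symmetric application of Chebyshev produces $Y_k \ge \e[Y_k] - \omega(1) k^{3/2}$ with high probability, and combining with $Y_k \le X_k$ yields the claimed lower bound $X_k \ge k^2\log k - (1-\gamma)k^2 - \omega(1)k^{3/2}$.

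No step is a serious obstacle: the mean and variance asymptotics of $Y_k$ and $Z_k$ are already in hand, and the sandwich $Y_k \le X_k \le Z_k$ is essentially built into the definitions of these auxiliary variables. The only mild bookkeeping check is that the $O(k\log k)$ and $O(k)$ error terms in the two means are of strictly lower order than the Chebyshev slack $\omega(1)k^{3/2}$, which is immediate since both are $o(k^{3/2})$.
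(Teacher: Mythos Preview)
Your proposal is correct and follows essentially the same approach as the paper's own proof: sandwich $X_k$ between $Y_k$ and $Z_k$, then apply Chebyshev's inequality to each end using the known means and the $\Theta(k^3)$ variance bounds. The paper presents the argument slightly more tersely, but the logic, the ingredients, and the order of steps are identical to yours.
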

\begin{proof}
We begin by showing that $P[Y_k > k^2\log k - (1-\gamma) k^2 - \omega(1) k^{\frac{3}{2}}] \rightarrow 1$. It holds that
\begin{align*}
P[Y_k \le k^2\log k - (1-\gamma) k^2 - \omega(1) k^{\frac{3}{2}}] &\le P[|Y_k-E[Y_k]| \ge \omega(1) k^\frac{3}{2}]
\\
& \le \frac{\Theta(k^3)}{(\omega(1) k^\frac{3}2)^2} 
\\
&\to0
\end{align*}
Next, we use a similar argument to show that $P[Z_k < k^2 \log k + \gamma k^2 + \omega(1) k^\frac{3}{2}]$ with high probability.
\begin{align*}
P[Z_k \ge k^2 \log k + \gamma k^2 +\omega(1)k^\frac{3}{2}] & \le P[|Z_k-E[Z_k]| \ge \omega(1) k^\frac{3}{2}]
\to0.
\end{align*}
Since $Y_k \le X_k \le Z_k$, these two inequalities suffice to show that $k^2\log k - (1-\gamma) k^2 - \omega(1) k^{\frac{3}{2}} \le X_k \le  k^2\log k + \gamma k^2+\omega(1) k^{\frac{3}{2}}$ with high probability.
\end{proof}
Therefore, $X_k$ lies, with high probability, in an interval of length $k^2$ around its expected value.  It would be interesting to be able to nail down better asymptotic estimates in the above argument.  What are $E(X_k)$ and $var (X_k)$?
%

\section{Acknowledgments}  The research of the first and third authors was supported by NSF Grant 1004624.  The research of the second author was supported by the Acheson J. Duncan Fund for the Advancement of Research in Statistics.

\end{document}